\theoremstyle{plain}
\newtheorem{thm}{Theorem}[section]
\theoremstyle{remark}
\newtheorem{rem}{Remark}[section]
\author{
Li-Chang Hung\footnote{Corresponding author's email address: \texttt{lichang.hung@gmail.com
} 
         }
\vspace{10mm}
\\
 \small
 \textit{Department of Mathematics, National Taiwan University, Republic of Taiwan}
\\
\\
\small
 \small 
}
\title{Blow-up in reaction-diffusion systems under Robin boundary conditions
}
\date{
}
\begin{document}
\maketitle

\begin{abstract}
In this paper we apply the differential inequality technique of Payne {\it et. al} \cite{Payne&SchaeferRobin08} to show that a reaction-diffusion system admits blow-up solutions, and to determine an upper bound for the blow-up time. For a particular nonlinearity, a lower bound on the blow-up time, when blow-up does occur, is also given. 
\end{abstract}




\section{Introduction}
\vspace{5mm}


Since the pioneering work of Fujita (\cite{Fujita-blowup-66}) on the blow-up of solutions of nonlinear diffusion equations, there has been considerable interest in the study of such solutions for nonlinear parabolic equations. Blow-up phenomena can be observed in nature and is important in various disciplines such as biology, chemistry and physics. More detailed results related to the blow-up of solutions can be seen, for example, in the monograph \cite{Quittner&Souplet07,Samarskii95,Straughan98}, the surveys \cite{Bandle&Brunner97,Fila&Filo-blow-survey96,Galaktionov&Vazquez02,Souplet-survey05} and bibliographies cited therein.


Recently, Payne and Schaefer (\cite{Payne&Schaefer08nonlineara,Payne&Philippin&Schaefer-blowup-bound08,Payne&Schaefer-blowup-lowerbdd-Neumann06,Payne&Schaefer07JMAA,Payne&SchaeferRobin08,Payne&Schaefer09ProRoy}) have applied the \textit{energy method} to derive a differential inequality for certain integrals corresponding to the blow-up solutions of parabolic equations. By means of this method, they find lower bounds on the blow-up time in certain nonlinear parabolic problems. In the present paper, we find that the above-mentioned method can be applied to \textit{reaction-diffusion systems} to obtain generalizations of results in \cite{Payne&SchaeferRobin08}. More precisely, it can be shown that Theorem 2.1 and Theorem 3.1 in \cite{Payne&SchaeferRobin08} can be extended to Theorem~\ref{thm: main thm} and  Theorem~\ref{thm: main thm for lower bound for blow-up time}, respectively in this paper.

Compared with cases of single equations, little work appears to have been devoted to systems. Motivated by Theorem 2.1 in \cite{Payne&SchaeferRobin08}, we show that the result in Theorem 2.1 can be extended to the \textit{gradient system} case. To be specific, we consider the following initial-boundary value problem for the reaction-diffusion system under a Robin boundary condition, i.e.
\begin{equation}\label{eqn: IBV prob for RD system}
\begin{cases}
\vspace{2mm}
u_t= \Delta u+f_1(u,v), \quad \text{in}\quad \Omega\times(0,\infty), \\
\vspace{2mm}
v_t= \Delta v+f_2(u,v), \quad \text{in}\quad \Omega\times(0,\infty),\\
\vspace{2mm}
\frac{\displaystyle\partial u}{\displaystyle\partial \nu}+\gamma_1\,u=0, \quad
\frac{\displaystyle\partial v}{\displaystyle\partial \nu}+\gamma_2\,v=0, \quad  \text{on}\quad \partial\Omega\times(0,\infty),\\
u(x,0)=g_1(x), \quad v(x,0)=g_2(x) \quad \text{in}\quad \bar{\Omega},
\end{cases}
\end{equation}
where $(u,v)=(u(x,t),v(x,t))$; $\Omega\in\mathbb{R}^N$, $N\geq2$ is a bounded domain with smooth $\partial\Omega$; $\gamma_1$ and $\gamma_2$ are positive constants; $\nu$ is the unit outward normal on $\partial\Omega$; and $g_1(x)$ and $g_2(x)$ are nonnegative functions which do not completely vanish. Suppose that the nonlinearity $f_1(u,v)$ and $f_2(u,v)$ in \eqref{eqn: IBV prob for RD system} satisfy

\begin{itemize}
  \item [(H1)] $u\,f_1(u,v)+v\,f_2(u,v)\geq 2(1+\alpha)\,F(u,v)$,
\end{itemize}
where $F(u,v)$ is a solution to $\partial_u F(u,v)=f_1(u,v)$, $\partial_v F(u,v)=f_2(u,v)$, and $\alpha>0$ is a constant.
In addition to the non-negativity of the initial conditions $g_1(x)$ and $g_2(x)$, we impose the additional conditions:
\begin{itemize}
  \item [(H2)] $2\int_{\Omega}\,F(g_1(x),g_2(x))\,dx\geq \gamma_1\,\int_{\partial\Omega}\, g_1^2\,ds+\int_{\Omega}\, |\nabla g_1|^2\,dx$;
  \item [(H3)] $2\int_{\Omega}\,F(g_1(x),g_2(x))\,dx\geq \gamma_2\,\int_{\partial\Omega}\, g_2^2\,ds+\int_{\Omega}\, |\nabla g_2|^2\,dx$.
\end{itemize}


Under these conditions we have:

\vspace{5mm}
\begin{thm}\label{thm: main thm}
Suppose that $(u(x,t),v(x,t))$ is a pair of solution to \eqref{eqn: IBV prob for RD system}. If $(H1)\sim(H3)$ are satisfied, then at least one of $u(x,t)$ and $v(x,t)$ blows up in finite time $t^{\ast}$, where $t^{\ast}$ is bounded above by \eqref{eqn: lower bdd of blow up time}.
\end{thm}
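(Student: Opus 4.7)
The plan is to combine a Payne--Schaefer style energy estimate with Levine's concavity argument. As the blow-up indicator I take $\Phi(t):=\int_\Omega(u^2+v^2)\,dx$; since the conclusion only requires that one of $u,v$ become infinite, showing $\Phi\to\infty$ in finite time suffices. Multiplying the equation for $u$ by $u$, the equation for $v$ by $v$, integrating on $\Omega$, and using the divergence theorem together with the Robin conditions (which convert the boundary fluxes into $-\gamma_1\int_{\partial\Omega}u^2-\gamma_2\int_{\partial\Omega}v^2$) gives
\begin{equation*}
\Phi'(t)=2\!\int_\Omega(uf_1+vf_2)\,dx-2\!\int_\Omega(|\nabla u|^2+|\nabla v|^2)\,dx-2\!\int_{\partial\Omega}(\gamma_1 u^2+\gamma_2 v^2)\,ds.
\end{equation*}
Applying (H1) to the reaction term rewrites this as $\Phi'(t)\geq 4\alpha\int_\Omega F\,dx+2J(t)$, where $J(t):=2\int_\Omega F-\int_\Omega(|\nabla u|^2+|\nabla v|^2)-\int_{\partial\Omega}(\gamma_1 u^2+\gamma_2 v^2)$.

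The second step is the key monotonicity identity $J'(t)=2\int_\Omega(u_t^2+v_t^2)\,dx\ge 0$. I would derive it by differentiating $J$ in time: the boundary terms $\partial_\nu u+\gamma_1 u$ and $\partial_\nu v+\gamma_2 v$ cancel exactly thanks to the Robin conditions, and after substituting $\Delta u=u_t-f_1,\ \Delta v=v_t-f_2$ the remaining volume integrals collapse to $2\int(u_t^2+v_t^2)$. Hypotheses (H2) and (H3) are the natural companions of the single-equation condition in the Payne--Schaefer scalar theorem: each one separately bounds a single-component Robin energy $\gamma_i\int_{\partial\Omega}g_i^2+\int|\nabla g_i|^2$ by $2\int F(g_1,g_2)$, and together they control $J(0)$ and yield the dissipation estimate $\int_0^t\!\int(u_\tau^2+v_\tau^2)\le \tfrac12(J(t)-J(0))$ that feeds the next step.

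To promote the first-order inequality on $\Phi'$ into finite-time blow-up I would introduce the Levine-type functional
\begin{equation*}
\Psi(t):=\int_0^t\Phi(\tau)\,d\tau+(T-t)\Phi(0)+\beta(t+\sigma)^2,\qquad T,\beta,\sigma>0,
\end{equation*}
for which $\Psi'(t)=2\int_0^t\!\int_\Omega(uu_\tau+vv_\tau)\,dx\,d\tau+2\beta(t+\sigma)$ and $\Psi''(t)=\Phi'(t)+2\beta$. A double Cauchy--Schwarz---one on $\Omega\times[0,t]$ and another coupling the space-time integral with the scalar $\beta(t+\sigma)$---yields
\begin{equation*}
\bigl(\Psi'(t)\bigr)^2\leq 4\,\Psi(t)\Big(\int_0^t\!\int_\Omega(u_\tau^2+v_\tau^2)\,dx\,d\tau+\beta\Big).
\end{equation*}
Combining this with the lower bound $\Psi''\geq 4\alpha\int F+2J+2\beta$ coming from step one and using the dissipation identity from step two to eliminate $\int_0^t\!\int(u_\tau^2+v_\tau^2)$ in favor of $\int F$ and $J$, elementary algebra then produces the Jensen-type inequality $\Psi(t)\Psi''(t)-(1+\alpha)(\Psi'(t))^2\ge 0$, provided $\beta$ is taken sufficiently small relative to the surplus supplied by (H2)--(H3). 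This inequality is equivalent to the concavity of $\Psi^{-\alpha}$, and since $\Psi(0)=T\Phi(0)+\beta\sigma^2>0$ and $\Psi'(0)=2\beta\sigma>0$, the tangent-line inequality for concave functions forces $\Psi^{-\alpha}(t)\leq\Psi^{-\alpha}(0)-\alpha t\,\Psi^{-\alpha-1}(0)\,\Psi'(0)$, so that $\Psi$, and hence $\Phi$, must blow up no later than $t^\ast\leq\Psi(0)/[\alpha\Psi'(0)]$; after optimizing over the auxiliary parameters this yields the explicit upper bound in the theorem.

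The main technical obstacle is the bookkeeping in the concavity inequality: one has to arrange $\beta$, $\sigma$ (and $T$) so that the $4\alpha\int F$ delivered by (H1) dominates simultaneously the $(1+\alpha)$ factor on $(\Psi')^2$ and the extra $\beta$-contribution. The surplus needed for this margin comes precisely from the strict part of (H2) and (H3), so the delicate step is tracking how the inequality degrades as those hypotheses are saturated. A subsidiary subtlety is the borderline case of equality at $t=0$: one handles it by letting $\beta\downarrow 0$ and exploiting the strict positivity of $J'(t)=2\int(u_t^2+v_t^2)>0$ for $t>0$ whenever $(u,v)$ is non-stationary, restarting the concavity argument from a slightly later time at which the inequality becomes strict.
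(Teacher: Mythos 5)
Your first half is the paper's proof in different notation: the same energy $E(t)=\int_\Omega(u^2+v^2)\,dx$, the same auxiliary functional (your $J$ is the paper's $J$ divided by $2(1+\alpha)$), the same identity $J'(t)=2\int_\Omega(u_t^2+v_t^2)\,dx$ obtained from the Robin conditions, and the same use of (H2)--(H3) to conclude $J(0)\geq 0$. Where you genuinely diverge is the concluding mechanism. The paper stays first order: from the Cauchy--Schwarz bound $(E')^2\leq\frac{1}{1+\alpha}\,E\,J'$ together with $E'\geq J$ it obtains $(1+\alpha)E'/E\leq J'/J$, integrates this to $J(t)\geq M\,E(t)^{1+\alpha}$ with $M=J(0)/E(0)^{1+\alpha}$, hence $E'\geq M\,E^{1+\alpha}$ and $E^{-\alpha}(t)\leq E^{-\alpha}(0)-\alpha M t$, which is exactly \eqref{eqn: lower bdd of blow up time}. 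You instead invoke Levine's concavity method on the time-integrated functional $\Psi(t)=\int_0^t\Phi+(T-t)\Phi(0)+\beta(t+\sigma)^2$. Your bracket computation does close --- using $\Phi'\geq 2(1+\alpha)J(t)$ and $\int_0^t\!\int(u_\tau^2+v_\tau^2)=\tfrac12(J(t)-J(0))$ one gets $\Psi\Psi''-(1+\alpha)(\Psi')^2\geq\Psi\,\bigl[2(1+\alpha)J(0)-2(1+2\alpha)\beta\bigr]\geq 0$ for $\beta$ small --- so finite-time blow-up of $\Phi$ does follow. The Levine detour is legitimate but unnecessary here: the parabolic structure already supplies the pointwise-in-time inequality $E'\geq J$, which is precisely what lets the paper avoid introducing $T$, $\beta$, $\sigma$ altogether.

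The genuine shortfall is quantitative. The theorem does not merely assert blow-up; it asserts the specific upper bound \eqref{eqn: lower bdd of blow up time}, i.e.\ $t^{\ast}\leq 1/(\alpha M E(0)^{\alpha})=E(0)/(\alpha J(0))$ in the paper's normalization of $J$. Your route cannot reproduce that constant: minimizing $\Psi(0)/(\alpha\Psi'(0))=(T\Phi(0)+\beta\sigma^2)/(2\alpha\beta\sigma)$ subject to the self-consistency requirement that $T$ exceed the predicted blow-up time and to the constraint $\beta\leq(1+\alpha)J(0)/(1+2\alpha)$ forced by your concavity inequality leaves an upper bound larger than the paper's by a factor of order $(1+2\alpha)/\alpha$. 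So the claim that ``after optimizing over the auxiliary parameters this yields the explicit upper bound in the theorem'' is not correct as written; to prove the theorem as stated you should replace the concavity step by the direct integration of $(1+\alpha)E'/E\leq J'/J$. Two further caveats, which you share with the paper rather than introduce: the argument is vacuous unless $J(0)>0$ strictly (if $J(0)=0$ then $M=0$ and the right-hand side of \eqref{eqn: lower bdd of blow up time} is infinite, and your $\Psi'(0)=2\beta\sigma$ degenerates as $\beta\downarrow 0$); and summing (H2) and (H3) as literally stated yields only $4\int_\Omega F\geq\sum_i(\gamma_i\int_{\partial\Omega}g_i^2+\int_\Omega|\nabla g_i|^2)$, whereas $J(0)\geq 0$ requires $2\int_\Omega F$ to dominate that sum --- a gap your phrase ``together they control $J(0)$'' inherits verbatim from the paper.
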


The physical meaning of the Robin boundary conditions can be explained as follows. Suppose that $u$ and $v$ represent temperature, and are governed by the equations in the problem \eqref{eqn: IBV prob for RD system}. Then the Robin boundary conditions mean that the heat flux $\frac{\displaystyle\partial u}{\displaystyle\partial \nu}$ and $\frac{\displaystyle\partial v}{\displaystyle\partial \nu}$ on the boundary of $\Omega$ are proportional to the temperature $u$ and $v$ on the boundary of $\Omega$, respectively. Since $\gamma_1$ and $\gamma_2$ are positive constants, it follows that the larger the heat flux is, the smaller the temperature is. We note that, from the biological point of view, the temperature and the heat flux can be substituted respectively to population density and population flux. 

In other words, the larger the population flux is, the smaller the population density is. As a consequence, when the population flux on $\partial\Omega$ is large, the population density on $\partial\Omega$ is small. The low density of $u$ and $v$ on $\partial\Omega$ then may result in the blow-up of $u$ or $v$ since the large flux flows into $\Omega$ but on the boundary of $\Omega$, the density of $u$ and $v$ are restricted to be small. Therefore, $u$ and $v$ are may be forced to aggregate together so that blow-up occurs.

The remainder of this paper is organized as follows. In Section~\ref{sec: Proof of Theorem}, we give the proof of Theorem~\ref{thm: main thm}. This theorem asserts that for certain initial conditions, the solutions of \eqref{eqn: IBV prob for RD system} blow up in finite time, with an upper bound given by \eqref{eqn: lower bdd of blow up time}. Section~\ref{sec: Lower bound for blow-up time} is devoted to determining a lower bound on the blow-up time when blow-up does occur. In addition, Theorem~\ref{thm: main thm for lower bound for blow-up time (H2) and (H3) are replaced} provides a cooperative system considered in \cite{Bedjaoui&Souplet-blowup-absorption02} a lower bounded for blow-up time when blow-up does occur. Finally, we conclude the present paper with some remarks in Section~\ref{sec: Concluding Remarks}.

\vspace{5mm}
\setcounter{equation}{0}
\setcounter{figure}{0}
\section{Proof of Theorem~\ref{thm: main thm}}\label{sec: Proof of Theorem}
\vspace{5mm}

In this section, Theorem~\ref{thm: main thm} is proven.

\vspace{5mm}
\begin{proof}
First of all, we define

\begin{equation}
E(t)=\int_{\Omega}\,(u^2+v^2)\,dx.
\end{equation}
By means of integration by parts and $(H1)$, we arrive at
\begin{align}
  E'(t)=& 2\,\int_{\Omega}\,u\,(\Delta u+f_1(u,v))+v\,(\Delta v+f_2(u,v))\,dx\notag\\
     =& -2\,\gamma_1\,\int_{\partial\Omega}\,u^2\,ds-2\,\int_{\Omega}\,|\nabla u|^2\,dx
        +2\,\int_{\Omega}\,u\,f_1(u,v)\,dx\notag\\
      & -2\,\gamma_2\,\int_{\partial\Omega}\,v^2\,ds-2\,\int_{\Omega}\,|\nabla v|^2\,dx
        +2\,\int_{\Omega}\,v\,f_2(u,v)\,dx\notag\\
  \geq&  -2\,(1+\alpha)\,(\gamma_1\,\int_{\partial\Omega}\,u^2\,ds+\int_{\Omega}\,|\nabla u|^2\,dx)
         -2\,(1+\alpha)\,(\gamma_2\,\int_{\partial\Omega}\,v^2\,ds+\int_{\Omega}\,|\nabla v|^2\,dx)\notag\\
      &+ 4\,(1+\alpha)\,\int_{\Omega}\,F(u,v)\,dx
       .\notag\\
\end{align}
Letting
\begin{align}
  J(t)
     =&  -2\,(1+\alpha)\,(\gamma_1\,\int_{\partial\Omega}\,u^2\,ds+\int_{\Omega}\,|\nabla u|^2\,dx)
         -2\,(1+\alpha)\,(\gamma_2\,\int_{\partial\Omega}\,v^2\,ds+\int_{\Omega}\,|\nabla v|^2\,dx)\notag\\
      &  +4\,(1+\alpha)\,\int_{\Omega}\,F(u,v)\,dx,\notag\\
\end{align}
we calculate the derivative of $J(t)$ to obtain
\begin{align}
J'(t)=&-4\,(1+\alpha)(\gamma_1\,\int_{\partial\Omega}\,u\,u_t\,ds
              +\int_{\Omega}\,\nabla u\cdot \nabla u_t\,dx-\int_{\Omega}\,f_1(u,v)\,u_t\,dx)\notag\\
         &-4\,(1+\alpha)(\gamma_2\,\int_{\partial\Omega}\,v\,v_t\,ds
              +\int_{\Omega}\,\nabla v\cdot \nabla v_t\,dx-\int_{\Omega}\,f_2(u,v)\,v_t\,dx)\notag\\
         =&-4\,(1+\alpha)(\gamma_1\,\int_{\partial\Omega}\,u\,u_t\,ds
              +\int_{\partial\Omega}\,\frac{\partial u}{\partial \nu}\,u_t\,ds
              -\int_{\Omega}\,u_t\,(\Delta u+f_1(u,v))\notag\\
         &-4\,(1+\alpha)(\gamma_2\,\int_{\partial\Omega}\,v\,v_t\,ds
              +\int_{\partial\Omega}\,\frac{\partial v}{\partial \nu}\,v_t\,ds
              -\int_{\Omega}\,v_t\,(\Delta v+f_2(u,v))\notag\\
     \geq& 4\,(1+\alpha)\,\int_{\Omega}\,(u_t^2+v_t^2)\,dx\notag,\\
\end{align}
by virtue of integration by parts and the Robin boundary conditions in \eqref{eqn: IBV prob for RD system}. Due to $(H2)$ and $(H3)$, we have $J(0)\geq0$. Then $J(0)\geq0$ and $J'(t)\geq 0$, for $t\geq0$, yield $J(t)\geq 0$  for $t\geq0$. Because
\begin{equation}
E'(t)=2\int_{\Omega}\,(u\,u_t+v\,v_t)\,dx,
\end{equation}
we have by applying Cauchy-Schwartz inequality
\begin{align}\label{eqn: ineq by Cauchy-Schwartz inequality }
(E'(t))^2=& 4\,\bigg(   \Big(\int_{\Omega}\,u\,u_t\,dx\Big)^2
                              +\Big(\int_{\Omega}\,v\,v_t\,dx\Big)^2
                              +  2\,\Big(\int_{\Omega}\,u\,u_t\,dx\Big)
                                  \,\Big(\int_{\Omega}\,v\,v_t\,dx\Big)  \bigg)\notag\\
            \leq& 4\,\bigg( \int_{\Omega}\,u^2\,dx \int_{\Omega}\,u_t^2\,dx
                           +\int_{\Omega}\,v^2\,dx \int_{\Omega}\,v_t^2\,dx     \notag\\
                           &+ 2\,\Big(   \int_{\Omega}\,u^2\,dx \,\int_{\Omega}\,u_t^2\,dx
                                         \int_{\Omega}\,v^2\,dx \,\int_{\Omega}\,v_t^2\,dx \Big)^{\frac{1}{2}}   \bigg)\notag\\
     \leq& 4\,\int_{\Omega}\,(u^2+v^2)\,dx \int_{\Omega}\,(u_t^2+v_t^2)\,dx\notag\\
     \leq& \frac{1}{1+\alpha}\,E(t)\,J'(t).
\end{align}
The last inequality holds since $\sqrt{a\,b}\leq \frac{a+b}{2}$, for $a,b\geq0$. From the definition of $J(t)$, $E'(t)\geq J(t)$, and consequently we can replace $E'(t)$ by $J(t)$ in \eqref{eqn: ineq by Cauchy-Schwartz inequality } to obtain
\begin{equation}
J(t)\,E'(t)\leq \frac{1}{1+\alpha}\,E(t)\,J'(t)
\end{equation}
or
\begin{equation}
(1+\alpha)\,\frac{E'(t)}{E(t)}\leq \frac{J'(t)}{J(t)}.
\end{equation}
Following the same arguments in \cite{Payne&SchaeferRobin08}, we obtain that $\varphi(t)$ satisfies the inequality
\begin{equation}\label{eqn: ineq to have a contradiction}
\frac{1}{(E(t))^{\alpha}}\leq \frac{1}{(E(0))^{\alpha}}-
\alpha\,M\,t,
\end{equation}
where $M=\frac{J(0)}{(E(0))^{1+\alpha}}$. Since the last inequality cannot be true for all $t\geq0$, we infer that at least one of $u(x,t)$ and $v(x,t)$ blows up in finite time, $t^{\ast}$, where $t^{\ast}$ is bounded above by
\begin{equation}\label{eqn: lower bdd of blow up time}
t^{\ast}\leq\frac{1}{\alpha\,M\,(E(0))^{\alpha}}.
\end{equation}
\end{proof}
\vspace{5mm}
We note that Theorem~\ref{thm: main thm} remains true if the Robin boundary conditions are replaced by Neumann boundary conditions (i.e. $\gamma_1=\gamma_2=0$).
To illustrate the results in Theorem~\ref{thm: main thm}, we give an example of the nonlinearity in \eqref{eqn: IBV prob for RD system}. Take $F(u,v)=u^2\,v^3$, then $f_1=\partial_u F(u,v)=2\,u\,v^3$, $f_2=\partial_v F(u,v)=3\,u^2\,v^2$,
\begin{equation}
u\,f_1(u,v)+v\,f_2(u,v)=5\,u^2\,v^3,
\end{equation}
and
\begin{equation}
2(1+\alpha)\,F(u,v)=2(1+\alpha)\,u^2\,v^3.
\end{equation}
Clearly, $(H1)$ is fulfilled when $0<\alpha\leq\frac{3}{2}$.

\vspace{5mm}
\begin{rem}
Equivalently, $(H1)$ can be rewritten as
\begin{equation}
u\,\partial_u F(u,v)+v\,\partial_v F(u,v)\geq 2(1+\alpha)\,F(u,v).
\end{equation}
For equality, that is
\begin{equation}
u\,\partial_u F(u,v)+v\,\partial_v F(u,v)=2(1+\alpha)\,F(u,v),
\end{equation}
which can be solved by \textit{the method of characteristics} to give
\begin{equation}
F(u,v)=c\,u^{2(1+\alpha)}\,h\bigg(\frac{v}{u}\bigg),
\end{equation}
where $h=h(w)$ is an arbitrary smooth function and $c$ is an arbitrary constant. This solution is useful in looking for nonlinearities $f_1$ and $f_2$ in \eqref{eqn: IBV prob for RD system} which satisfy $(H1)$. 
\end{rem}

\vspace{5mm}
\setcounter{equation}{0}
\setcounter{figure}{0}
\section{Lower bound for blow-up time}\label{sec: Lower bound for blow-up time}
\vspace{5mm}

In this section, a lower bound on the blow-up time is obtained when blow-up does occur.
In particular, (2.16) in \cite{Payne&Schaefer-blowup-lowerbdd-Neumann06} plays an essential role in proving the following 

\vspace{5mm}
\begin{thm}\label{thm: main thm for lower bound for blow-up time}
Let $(u(x,t),v(x,t))$ be a pair of nonnegative solutions to \eqref{eqn: IBV prob for RD system} and at least one of $u(x,t)$ and $v(x,t)$ blow up in finite time, $t=t^{\ast}$. Suppose that $(A1)\sim(A3)$ below are also satisfied:
\begin{itemize}
  \item [(A1)] $\Omega\in\mathbb{R}^3$ is a bounded smooth convex domain;
  \item [(A2)] $f_1(u,v)\leq k_1\,u^{p+1}$ for $k_1,u,v>0$ and $p\geq1$;
  \item [(A3)] $f_2(u,v)\leq k_2\,v^{p+1}$ for $k_2,u,v>0$ and $p\geq1$.
\end{itemize}
Then $t^{\ast}$ is bounded below by \eqref{eqn: lower bdd of blow up time-2} .
\end{thm}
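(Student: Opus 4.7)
My plan is to follow the Payne--Schaefer scalar strategy, adapted to the coupled pair $(u,v)$. I would work with the auxiliary functional
$$\Phi(t) := \int_{\Omega}\bigl(u^{2p}+v^{2p}\bigr)\,dx,$$
which is continuous on $[0,t^{\ast})$ and, by hypothesis, becomes unbounded as $t\to t^{\ast-}$. Differentiating, substituting \eqref{eqn: IBV prob for RD system}, integrating by parts, and using the Robin conditions $\partial_{\nu}u=-\gamma_{1}u$ and $\partial_{\nu}v=-\gamma_{2}v$, I obtain
\begin{align*}
\Phi'(t) \;=\; & -2p\gamma_{1}\!\int_{\partial\Omega}\!u^{2p}\,ds - \frac{2(2p-1)}{p}\!\int_{\Omega}|\nabla u^{p}|^{2}\,dx + 2p\!\int_{\Omega}\!u^{2p-1}f_{1}(u,v)\,dx \\
 & + (\text{same expression with } v,\gamma_{2},f_{2}).
\end{align*}
Because $u,v\geq 0$ and $\gamma_{1},\gamma_{2}>0$, the boundary integrals are nonpositive and may be discarded; the growth assumptions (A2)--(A3) then dominate the reaction terms by $2pk_{1}\!\int_{\Omega}u^{3p}\,dx$ and $2pk_{2}\!\int_{\Omega}v^{3p}\,dx$, respectively.

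The next step uses the geometric hypothesis (A1). For a bounded convex domain in $\mathbb{R}^{3}$, inequality (2.16) of \cite{Payne&Schaefer-blowup-lowerbdd-Neumann06} bounds $\int_{\Omega}w^{3}\,dx$ in terms of $\int_{\Omega}w^{2}\,dx$ and $\int_{\Omega}|\nabla w|^{2}\,dx$. Applying it with $w=u^{p}$ and $w=v^{p}$, grouping the resulting terms into powers of $\Phi$ and into a coefficient multiplying the total gradient integral, and treating the mixed terms by an elementary Young inequality, yields an estimate of the form
$$\Phi'(t) \;\leq\; \Bigl(B-\tfrac{2(2p-1)}{p}\Bigr)\!\int_{\Omega}\!\bigl(|\nabla u^{p}|^{2}+|\nabla v^{p}|^{2}\bigr)\,dx \;+\; G(\Phi(t)),$$
where $G$ is a polynomial whose leading power is super-linear in $\Phi$, and where the constant $B>0$ can be made arbitrarily small at the price of enlarging the constants in $G$.

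Choosing $B\leq\frac{2(2p-1)}{p}$ renders the gradient coefficient nonpositive, so it can be dropped, leaving the closed scalar differential inequality $\Phi'(t)\leq G(\Phi(t))$. Separating variables and integrating from $0$ to $t^{\ast}$ then gives
$$t^{\ast} \;\geq\; \int_{\Phi(0)}^{\infty}\frac{d\eta}{G(\eta)},$$
the right-hand side being finite thanks to the super-linear growth of $G$. This is the explicit lower bound \eqref{eqn: lower bdd of blow up time-2} asserted in the theorem.

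The main obstacle I anticipate is the absorption step. Inequality (2.16) produces a cross term in which $\int_{\Omega}|\nabla w|^{2}\,dx$ is multiplied by a fractional power of $\int_{\Omega}w^{2}\,dx$, so the Young parameter has to be chosen carefully so that the resulting coefficient of the gradient integral is dominated by $\frac{2(2p-1)}{p}$ uniformly in $t$. The slack $\frac{2(2p-1)}{p}\geq 2$ (valid for all $p\geq 1$) is comfortable, but carrying the arithmetic through in a form that produces clean, explicit constants in the displayed lower bound is the tedious part of the argument. Everything else mirrors the scalar computations in \cite{Payne&Schaefer-blowup-lowerbdd-Neumann06} and \cite{Payne&SchaeferRobin08}.
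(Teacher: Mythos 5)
Your proposal follows essentially the same route as the paper's own proof: the same functional $\int_{\Omega}(u^{2p}+v^{2p})\,dx$, discarding the nonpositive Robin boundary terms, invoking (A2)--(A3) to reduce to $\int_{\Omega}u^{3p}\,dx$ and $\int_{\Omega}v^{3p}\,dx$, applying inequality (2.16) of the Payne--Schaefer Neumann paper with $w=u^{p}$ and $w=v^{p}$, and absorbing the gradient cross terms by a Young inequality with a suitably small parameter (the paper's $\beta_{1},\beta_{2}$) before integrating the resulting differential inequality $\mathcal{E}'\leq K_{1}\mathcal{E}^{3/2}+K_{2}\mathcal{E}^{3}$. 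The argument is correct and matches the paper step for step, differing only in that you leave the final constants implicit.
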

\begin{proof}
First let us define the auxiliary function
\begin{equation}
\mathcal{E}(t)=\int_{\Omega}\,(u^{2p}+v^{2p})\,dx.
\end{equation}
On applying Green's first identity and the equality $u^{2p-2}\,|\nabla u|^2=p^{-2}\,|\nabla u^p|^2$ , we arrive at
\begin{align}
  \mathcal{E}'(t)=& 2p\int_{\Omega}\,u^{2p-1}\,(\Delta u+f_1(u,v))+v^{2p-1}\,(\Delta v+f_2(u,v))\,dx\notag\\
     =& -2p\,\gamma_1\int_{\partial\Omega}\,u^{2p}\,ds-2p\,(2p-1)\,\int_{\Omega}\,u^{2p-2}\,|\nabla u|^2\,dx
        +2p\,\int_{\Omega}\,u^{2p-1}\,f_1(u,v)\,dx\notag\\
      & -2p\,\gamma_2\int_{\partial\Omega}\,v^{2p}\,ds-2p\,(2p-1)\,\int_{\Omega}\,v^{2p-2}\,|\nabla v|^2\,dx
        +2p\,\int_{\Omega}\,v^{2p-1}\,f_2(u,v)\,dx\notag\\
  \leq& -2\,(2p-1)\,p^{-1}\int_{\Omega}\,|\nabla u^{p}|^2\,dx
        +2p\,\int_{\Omega}\,u^{2p-1}\,f_1(u,v)\,dx\notag\\
      & -2\,(2p-1)\,p^{-1}\int_{\Omega}\,|\nabla v^{p}|^2\,dx
        +2p\,\int_{\Omega}\,v^{2p-1}\,f_2(u,v)\,dx\notag\\
  \leq& -2\,(2p-1)\,p^{-1}\int_{\Omega}\,|\nabla u^{p}|^2\,dx
        +2p\,k_1\,\int_{\Omega}\,u^{3p}\,dx\notag\\
      & -2\,(2p-1)\,p^{-1}\int_{\Omega}\,|\nabla v^{p}|^2\,dx
        +2p\,k_2\,\int_{\Omega}\,v^{3p}\,dx\notag\\
\end{align}
since $(A2)$ and $(A3)$ hold. Now our strategy is to relate $\int_{\Omega}\,v^{3p}\,dx$ in terms of $\mathcal{E}(t)$ and $\int_{\Omega}\,|\nabla v^{p}|^2\,dx$. To this end, the integral inequality (see (2.16) in \cite{Payne&Schaefer-blowup-lowerbdd-Neumann06}) is used:
\begin{equation}
\int_{\Omega}\,u^{3p}\,dx\leq
\frac{1}{3^{\frac{3}{4}}}\,\Bigg\{
\frac{3}{2\rho}\int_{\Omega}\,u^{2p}\,dx+
\bigg(\frac{d}{\rho}+1\bigg)
\bigg(\int_{\Omega}\,u^{2p}\,dx\bigg)^{\frac{1}{2}}
\bigg(\int_{\Omega}\,|\nabla u^{p}|^2\,dx\bigg)^{\frac{1}{2}}
\Bigg\}^{\frac{3}{2}},
\end{equation}
where for some origin inside $\Omega$,
\begin{equation}
\rho=\min_{\partial\Omega}\, x_i\,\nu_i, \quad d^2=\max_{\bar{\Omega}}\, x_i\,x_i,
\end{equation}
for $\nu_i$ the $i-$th component of the unit outward normal to $\partial\Omega$. Thus,
\begin{align}
  \mathcal{E}'(t)
  \leq& -2\,(2p-1)\,p^{-1}\int_{\Omega}\,|\nabla u^{p}|^2\,dx
        -2\,(2p-1)\,p^{-1}\int_{\Omega}\,|\nabla v^{p}|^2\,dx\notag\\
      & +\frac{2p\,k_1}{3^{\frac{3}{4}}}\,\Bigg\{
         \frac{3}{2\rho}\int_{\Omega}\,u^{2p}\,dx+
         \bigg(\frac{d}{\rho}+1\bigg)
         \bigg(\int_{\Omega}\,u^{2p}\,dx\bigg)^{\frac{1}{2}}
         \bigg(\int_{\Omega}\,|\nabla u^{p}|^2\,dx\bigg)^{\frac{1}{2}}
         \Bigg\}^{\frac{3}{2}}\notag\\
      & +\frac{2p\,k_2}{3^{\frac{3}{4}}}\,\Bigg\{
         \frac{3}{2\rho}\int_{\Omega}\,v^{2p}\,dx+
         \bigg(\frac{d}{\rho}+1\bigg)
         \bigg(\int_{\Omega}\,v^{2p}\,dx\bigg)^{\frac{1}{2}}
         \bigg(\int_{\Omega}\,|\nabla v^{p}|^2\,dx\bigg)^{\frac{1}{2}}
         \Bigg\}^{\frac{3}{2}}\notag.\\
\end{align}
By virtue of the elementary inequalities $(a+b)^{\frac{3}{2}}\leq 2^{\frac{1}{2}}(a^{\frac{3}{2}}+b^{\frac{3}{2}})$ and $a^{\frac{1}{4}}\,b^{\frac{3}{4}}\leq \frac{1}{4}a+\frac{3}{4}b$, we obtain
\begin{align}
  \mathcal{E}'(t)
  \leq& -2\,(2p-1)\,p^{-1}\int_{\Omega}\,|\nabla u^{p}|^2\,dx
        -2\,(2p-1)\,p^{-1}\int_{\Omega}\,|\nabla v^{p}|^2\,dx\notag\\
      & +\frac{2p\,k_1}{3^{\frac{3}{4}}}\,2^{\frac{1}{2}}\,\Bigg\{
         \bigg(\frac{3}{2\rho}\bigg)^{\frac{3}{2}}
         \bigg(\int_{\Omega}\,u^{2p}\,dx\bigg)^{\frac{3}{2}}+
         \bigg(\frac{d}{\rho}+1\bigg)^{\frac{3}{2}}
         \bigg(\int_{\Omega}\,u^{2p}\,dx\bigg)^{\frac{3}{4}}
         \bigg(\int_{\Omega}\,|\nabla u^{p}|^2\,dx\bigg)^{\frac{3}{4}}
         \Bigg\}\notag\\
      & +\frac{2p\,k_2}{3^{\frac{3}{4}}}\,2^{\frac{1}{2}}\,\Bigg\{
         \bigg(\frac{3}{2\rho}\bigg)^{\frac{3}{2}}
         \bigg(\int_{\Omega}\,v^{2p}\,dx\bigg)^{\frac{3}{2}}+
         \bigg(\frac{d}{\rho}+1\bigg)^{\frac{3}{2}}
         \bigg(\int_{\Omega}\,v^{2p}\,dx\bigg)^{\frac{3}{4}}
         \bigg(\int_{\Omega}\,|\nabla v^{p}|^2\,dx\bigg)^{\frac{3}{4}}
         \Bigg\}\notag\\
  \leq& -2\,(2p-1)\,p^{-1}\int_{\Omega}\,|\nabla u^{p}|^2\,dx
        -2\,(2p-1)\,p^{-1}\int_{\Omega}\,|\nabla v^{p}|^2\,dx\notag\\
      & +\frac{2p\,k_1}{3^{\frac{3}{4}}}\,2^{\frac{1}{2}}\,\Bigg\{
         \bigg(\frac{3}{2\rho}\bigg)^{\frac{3}{2}}
         \bigg(\int_{\Omega}\,u^{2p}\,dx\bigg)^{\frac{3}{2}}\notag\\
      & \hspace{25mm} +
         \bigg(\frac{d}{\rho}+1\bigg)^{\frac{3}{2}}
         \Bigg[
         \frac{\beta_1^{-3}}{4}\bigg(\int_{\Omega}\,u^{2p}\,dx\bigg)^{3}
         +
         \frac{3\beta_1}{4}\bigg(\int_{\Omega}\,|\nabla u^{p}|^2\,dx\bigg)^{\frac{3}{4}}
         \Bigg]
         \Bigg\}\notag\\
      & +\frac{2p\,k_2}{3^{\frac{3}{4}}}\,2^{\frac{1}{2}}\,\Bigg\{
         \bigg(\frac{3}{2\rho}\bigg)^{\frac{3}{2}}
         \bigg(\int_{\Omega}\,v^{2p}\,dx\bigg)^{\frac{3}{2}}\notag\\
      &  \hspace{25mm} +
         \bigg(\frac{d}{\rho}+1\bigg)^{\frac{3}{2}}
         \Bigg[
         \frac{\beta_2^{-3}}{4}\bigg(\int_{\Omega}\,v^{2p}\,dx\bigg)^{3}
         +
         \frac{3\beta_2}{4}\bigg(\int_{\Omega}\,|\nabla v^{p}|^2\,dx\bigg)^{\frac{3}{4}}
         \Bigg]
         \Bigg\}\notag,\\
\end{align}
where $\beta_1$ and $\beta_2$ are positive constants satisfying
\begin{equation}
-2\,(2p-1)\,p^{-1}+\frac{3^\frac{1}{4}p\,k_1}{2^\frac{1}{2}}\bigg(\frac{d}{\rho}+1\bigg)^{\frac{3}{2}}\,\beta_1\leq0,
\quad
-2\,(2p-1)\,p^{-1}+\frac{3^\frac{1}{4}p\,k_2}{2^\frac{1}{2}}\bigg(\frac{d}{\rho}+1\bigg)^{\frac{3}{2}}\,\beta_1\leq0.
\end{equation}
Accordingly,
\begin{align}
  \mathcal{E}'(t)
  \leq& \frac{2p\,k_1}{3^{\frac{3}{4}}}\,2^{\frac{1}{2}}\,\Bigg\{
         \bigg(\frac{3}{2\rho}\bigg)^{\frac{3}{2}}
         \bigg(\int_{\Omega}\,u^{2p}\,dx\bigg)^{\frac{3}{2}}
         +\bigg(\frac{d}{\rho}+1\bigg)^{\frac{3}{2}}
         \Bigg[
         \frac{\beta_1^{-3}}{4}\bigg(\int_{\Omega}\,u^{2p}\,dx\bigg)^{3}
         \Bigg]
         \Bigg\}+\notag\\
      & \frac{2p\,k_2}{3^{\frac{3}{4}}}\,2^{\frac{1}{2}}\,\Bigg\{
         \bigg(\frac{3}{2\rho}\bigg)^{\frac{3}{2}}
         \bigg(\int_{\Omega}\,v^{2p}\,dx\bigg)^{\frac{3}{2}}
         +\bigg(\frac{d}{\rho}+1\bigg)^{\frac{3}{2}}
         \Bigg[
         \frac{\beta_2^{-3}}{4}\bigg(\int_{\Omega}\,v^{2p}\,dx\bigg)^{3}
         \Bigg]
         \Bigg\},\\
  \leq& 3^\frac{3}{4}\,p\,k\,\rho^{-\frac{3}{2}}\,\Bigg (\int_{\Omega}\,(u^{2p}+v^{2p})\,dx \Bigg)^{\frac{3}{2}}+
        \frac{p\,k}{2^\frac{1}{2}\,3^{\frac{3}{4}}}\,\bigg(\frac{d}{\rho}+1\bigg)^{\frac{3}{2}}\,\beta^{-3}\,
        \bigg(\int_{\Omega}\,u^{2p}+v^{2p}\,dx\bigg)^{3},
\end{align}
where $k=\max(k_1,k_2)$ and  $\beta=\min(\beta_1,\beta_2)$. As a result, we obtain
\begin{equation}
\mathcal{E}'(t)\leq K_1\,\mathcal{E}^{\frac{3}{2}}(t)+K_2\,\mathcal{E}^3(t),
\end{equation}
where
\begin{equation}
K_1=3^\frac{3}{4}\,p\,k\,\rho^{-\frac{3}{2}},\quad K_2=\frac{p\,k}{2^\frac{1}{2}\,3^{\frac{3}{4}}}\,\bigg(\frac{d}{\rho}+1\bigg)^{\frac{3}{2}}\,\beta^{-3}.
\end{equation}
Integrating yields
\begin{equation}\label{eqn: lower bdd of blow up time-2}
t^{\ast}\geq \int_{\mathcal{E}(0)}^{\infty}\,\frac{d\xi}{K_1\,\xi^{\frac{3}{2}}+K_2\,\xi^3}.
\end{equation}
This completes the proof of the theorem.
\end{proof}

\vspace{5mm}
We remark that Theorem~\ref{thm: main thm for lower bound for blow-up time} remains true if the Robin boundary conditions are replaced by Neumann boundary conditions. From the proof of Theorem~\ref{thm: main thm for lower bound for blow-up time}, it is readily seen that the following result is true under an assumption which is weaker than $(A2)$ and $(A3)$.

\vspace{5mm}

\begin{thm}\label{thm: main thm for lower bound for blow-up time (H2) and (H3) are replaced}
Let $(u(x,t),v(x,t))$ be a pair of nonnegative solution of \eqref{eqn: IBV prob for RD system} and that at least one of $u(x,t)$ and $v(x,t)$ blow up in finite time, $t=t^{\ast}$. Suppose that $(A1)'\sim(A2)'$ below are satisfied:
\begin{itemize}
  \item [(A1)'] $\Omega\in\mathbb{R}^3$ is a bounded smooth convex domain;
  \item [(A2)'] For $k_1,k_2,u,v>0$ and $p\geq1$,
                \begin{equation}\label{eqn: (Hi) which replaces (H2) and (H3)}
                 u^{2p-1}\,f_1(u,v)+v^{2p-1}\,f_2(u,v)\leq k_1\,u^{3p}+k_2\,v^{3p}.
                \end{equation}

\end{itemize}
Then $t^{\ast}$ is bounded below by \eqref{eqn: lower bdd of blow up time-2} .
\end{thm}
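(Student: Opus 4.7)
The plan is to reuse the proof of Theorem~\ref{thm: main thm for lower bound for blow-up time} almost verbatim, observing that the conditions (A2) and (A3) enter that proof only through the single pointwise bound $u^{2p-1}f_1(u,v)+v^{2p-1}f_2(u,v)\le k_1 u^{3p}+k_2 v^{3p}$, which is exactly hypothesis (A2)$'$. Indeed, (A2) and (A3) trivially imply (A2)$'$ (just multiply by $u^{2p-1}\ge 0$ and $v^{2p-1}\ge 0$ and add), so (A2)$'$ is strictly weaker.

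Concretely, I would set $\mathcal{E}(t)=\int_\Omega(u^{2p}+v^{2p})\,dx$ and differentiate exactly as in the previous proof. Using Green's first identity and the Robin boundary conditions from \eqref{eqn: IBV prob for RD system} produces boundary contributions $-2p\gamma_i\int_{\partial\Omega}(\cdot)^{2p}\,ds\le 0$, which are discarded, and the identity $u^{2p-2}|\nabla u|^2=p^{-2}|\nabla u^p|^2$ (and likewise for $v$) recasts the Dirichlet terms. The resulting estimate up through this stage reads
\begin{align*}
\mathcal{E}'(t)\le{}&-2(2p-1)p^{-1}\int_\Omega|\nabla u^p|^2\,dx
                    -2(2p-1)p^{-1}\int_\Omega|\nabla v^p|^2\,dx\\
                    &+2p\int_\Omega\bigl(u^{2p-1}f_1(u,v)+v^{2p-1}f_2(u,v)\bigr)\,dx.
\end{align*}
This is precisely the point at which the original proof invoked (A2) and (A3) to split the reaction integrals; here I invoke (A2)$'$ instead, obtaining the same upper bound
$2pk_1\int_\Omega u^{3p}\,dx+2pk_2\int_\Omega v^{3p}\,dx$ for the combined reaction term.

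From that inequality onward, the proof is word for word identical to the one just given: apply the integral inequality (2.16) from \cite{Payne&Schaefer-blowup-lowerbdd-Neumann06} to each of $\int_\Omega u^{3p}\,dx$ and $\int_\Omega v^{3p}\,dx$ (which requires only the convexity/boundedness of $\Omega\subset\mathbb{R}^3$ in (A1)$'$), use the elementary inequalities $(a+b)^{3/2}\le 2^{1/2}(a^{3/2}+b^{3/2})$ and $a^{1/4}b^{3/4}\le\tfrac14 a+\tfrac34 b$ to split the mixed terms, choose $\beta_1,\beta_2>0$ exactly as before so the $|\nabla u^p|^2$ and $|\nabla v^p|^2$ contributions are absorbed by the negative gradient terms, and collect what remains with $k=\max(k_1,k_2)$, $\beta=\min(\beta_1,\beta_2)$ to arrive at the first-order differential inequality $\mathcal{E}'(t)\le K_1\mathcal{E}^{3/2}(t)+K_2\mathcal{E}^3(t)$ with the same $K_1,K_2$ as in \eqref{eqn: lower bdd of blow up time-2}. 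Integrating from $0$ to $t^\ast$ then yields \eqref{eqn: lower bdd of blow up time-2}.

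There is no real obstacle beyond noticing where (A2)+(A3) are used; the only mildly delicate point is to verify that combining the reaction terms before applying the cited integral inequality (rather than estimating them separately) does not strengthen the requirement. It does not, because the bound supplied by (A2)$'$ is exactly the sum of the two bounds that (A2) and (A3) would give, and (2.16) is then applied to $u^{3p}$ and $v^{3p}$ independently. Hence the lower bound \eqref{eqn: lower bdd of blow up time-2} remains valid under the weaker hypothesis (A2)$'$.
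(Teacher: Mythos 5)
Your proof is correct and coincides with the paper's own (implicit) argument: the paper establishes this theorem only by remarking that it is ``readily seen'' from the proof of Theorem~\ref{thm: main thm for lower bound for blow-up time}, precisely because $(A2)$ and $(A3)$ enter that proof solely through the combined pointwise bound $u^{2p-1}f_1+v^{2p-1}f_2\le k_1u^{3p}+k_2v^{3p}$, which is $(A2)'$. Your identification of that single point of use, together with the observation that $(A2)$ and $(A3)$ imply $(A2)'$, is exactly the intended reasoning.
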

\vspace{5mm}

Now we are in the position to apply the following results to give an example of the above theorem.

\vspace{5mm}
\begin{thm}[From \cite{Bedjaoui&Souplet-blowup-absorption02}]\label{thm: blowup soln in RD system with absorption}
Consider the following reaction-diffusion system with \textit{absorption}:
\begin{equation}\label{eqn: IBV prob for RD system with absorption}
\begin{cases}
\vspace{2mm}
u_t= \Delta u+v^p-a\,u^r, \quad \text{in}\quad \Omega\times(0,\infty), \\
\vspace{2mm}
v_t= \Delta v+u^q-b\,v^s, \quad \text{in}\quad \Omega\times(0,\infty),\\
\vspace{2mm}
u=0, \quad
v=0, \quad  \text{on}\quad \partial\Omega\times(0,\infty),\\
u(x,0)=g_1(x), \quad v(x,0)=g_2(x) \quad \text{in}\quad \Omega,
\end{cases}
\end{equation}
where $\Omega\in\mathbb{R}^n$; $a$, $b$, $p$, $q$, $r$ and $s$ are positive constants; $g_1$ and $g_2$ are nonnegative functions.
\begin{itemize}
  \item [(i)] If $p\,q>\max(r,1)\max(s,1)$, then there exist solutions of \eqref{eqn: IBV prob for RD system with absorption} which blow up in finite time.
  \item [(ii)] If $p\,q<\max(r,1)\max(s,1)$, then all solutions of \eqref{eqn: IBV prob for RD system with absorption} are global. Moreover, if $r$,$s\geq1$ (hence $p\,q<r\,s$), they are uniformly bounded.
  \item [(iii)] If $p\,q=\max(r,1)\max(s,1)$, then
                \begin{itemize}
                  \item [(a)] if $r,s>1$ and $a$ and $b$ are sufficiently small, then there exist solutions of \eqref{eqn: IBV prob for RD system with absorption} which blow up in finite time;
                  \item [(b)] if $r,s\geq1$, and $a^q\,b^r\geq1$ (equivalently, $a^s\,b^p\geq1$), then all solutions are global and uniformly bounded;
                  \item [(c)] if $r$ or $s\leq1$, then all solutions are global (possibly bounded).
                \end{itemize}
\end{itemize}
\end{thm}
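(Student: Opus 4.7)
The plan is to split the argument along the three threshold regimes in the statement, treating blow-up and global existence by different techniques that share a common theme of reduction to an ODE system for suitably chosen integral quantities.

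For part (i), I would use Kaplan's eigenfunction method. Let $\varphi_1 > 0$ be the first Dirichlet eigenfunction of $-\Delta$ on $\Omega$ with eigenvalue $\lambda_1 > 0$, normalized so that $\int_\Omega \varphi_1\,dx = 1$, and set
$$U(t) = \int_\Omega u(x,t)\,\varphi_1(x)\,dx, \qquad V(t) = \int_\Omega v(x,t)\,\varphi_1(x)\,dx.$$
Testing the PDEs against $\varphi_1$ and integrating by parts (all boundary terms vanish because $\varphi_1 = 0$ and $u = v = 0$ on $\partial\Omega$) produces evolution equations for $U$ and $V$ containing $\int v^p\varphi_1\,dx$ and $\int u^r\varphi_1\,dx$. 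Jensen's inequality applied to the convex map $z \mapsto z^p$ gives $\int v^p\varphi_1\,dx \geq V^p$, and to $z \mapsto z^r$ when $r \geq 1$ gives $\int u^r\varphi_1\,dx \geq U^r$; when $r < 1$ concavity yields a complementary linear bound, so that the effective absorption exponent in the $U$ equation is $\max(r,1)$, with $\max(s,1)$ playing the analogous role for $V$. One is then reduced to showing finite-time blow-up in an ODE system of the schematic form
$$U' \geq V^p - a\,U^{\max(r,1)} - \lambda_1 U, \qquad V' \geq U^q - b\,V^{\max(s,1)} - \lambda_1 V.$$
A self-similar ansatz $U = A(T-t)^{-\alpha}$, $V = B(T-t)^{-\beta}$ with exponents determined by matching the dominant reaction terms is a subsolution precisely when $pq > \max(r,1)\max(s,1)$, and the proof of (i) then concludes by choosing $g_1, g_2$ large enough that $(U(0),V(0))$ dominates the profile at $t = 0$.

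For part (ii), the strategy reverses: when $pq < \max(r,1)\max(s,1)$, I would construct a bounded supersolution. In the subcase $r, s \geq 1$ a spatially constant pair $(M,M)$ with $M$ large is already a supersolution, since the absorption $aM^r + bM^s$ pointwise dominates the reaction $M^p + M^q$; this immediately yields the claimed uniform boundedness. For the merely global (not necessarily bounded) conclusion I would instead differentiate a weighted $L^m$ quantity like $\int(u^{2p} + v^{2q})\,dx$ as in the proof of Theorem~\ref{thm: main thm for lower bound for blow-up time}, and use that the negative gradient and absorption terms beat the reaction under the subcritical exponent condition to close a Gronwall-type inequality on any finite interval.

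The main obstacle is the critical regime $pq = \max(r,1)\max(s,1)$ of part (iii), where the scaling is exactly balanced and no single term dominates asymptotically. For subcase (a) I would return to Kaplan's method and exploit the smallness of $a$ and $b$ to make the subsolution strictly growing despite the matched scaling, which is feasible because the coefficients in front of the reaction terms in the ODE system can be made to beat the absorption coefficients $a, b$ while keeping $\lambda_1$ fixed. Subcase (b) is the part I expect to be the hardest: the arithmetic condition $a^q b^r \geq 1$ is exactly the threshold for the spatially homogeneous ODE system to admit a trapping invariant region, and the proof must exhibit that region explicitly and use the Dirichlet boundary decay to place every solution inside it. Subcase (c) should be handled by a comparison argument: whichever of $r, s$ is at most $1$ renders the corresponding absorption essentially linear, after which one bootstraps against the other equation to obtain global existence.
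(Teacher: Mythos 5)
First, a point of orientation: the paper does not prove Theorem~\ref{thm: blowup soln in RD system with absorption} at all. It is imported verbatim from the cited work of Bedjaoui and Souplet and is used only to certify that the example following Theorem~\ref{thm: main thm for lower bound for blow-up time (H2) and (H3) are replaced} actually blows up. So there is no in-paper argument to compare yours against; what can be assessed is whether your sketch would stand on its own.

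It would not, as written, because of a directional error at the heart of your treatment of part (i). Setting $U(t)=\int_\Omega u\,\varphi_1\,dx$, Kaplan's method gives
\begin{equation*}
U'(t)=-\lambda_1 U+\int_\Omega v^p\varphi_1\,dx-a\int_\Omega u^r\varphi_1\,dx ,
\end{equation*}
and to obtain a lower bound for $U'$ you need a \emph{lower} bound on the reaction integral but an \emph{upper} bound on the absorption integral. Jensen's inequality for $r\ge 1$ gives $\int_\Omega u^r\varphi_1\,dx\ge U^r$, which is the wrong direction: there is no reverse inequality bounding $\int_\Omega u^r\varphi_1\,dx$ above by a function of $U$ alone, so the schematic ODE system $U'\ge V^p-aU^{\max(r,1)}-\lambda_1 U$ simply does not follow from the PDE. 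This is the well-known obstruction to applying Kaplan's method in the presence of superlinear absorption, and it is precisely the difficulty the critical exponent $pq=\max(r,1)\max(s,1)$ is calibrated against; the cited proof circumvents it by coupling the two functionals (controlling $\int u^r\varphi_1$ by the reaction integral $\int u^q\varphi_1$ of the \emph{other} equation via H\"older, which is where the exponent comparison enters) and by comparison with explicit blowing-up subsolutions, exploiting that the system is cooperative so the comparison principle is available. A secondary gap in the same step: only $pq>1$ is guaranteed, so one of $p,q$ may be less than $1$ and the convexity needed for $\int v^p\varphi_1\,dx\ge V^p$ can fail. By contrast, the supersolution parts of your sketch are sound in outline: for $r,s\ge 1$ a large spatially constant pair is a supersolution of the cooperative system, which gives (ii) in that subcase, and your nullcline computation for (iii)(b) is the right one --- when $pq=rs$ both nullclines are power curves with the same exponent $r/p=q/s$, and $a^s b^p\ge 1$ is exactly the condition that they are ordered so as to trap the ODE orbit. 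Your proposed Gronwall argument on $\int(u^{2p}+v^{2q})\,dx$ for the remaining global-existence cases will not close, for the same reason: the reaction terms are superlinear in that functional.
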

\vspace{5mm}

For the nonlinearity $f_1=v^3-a\,u^3$ and $f_1=u^3-b\,v^3$ (by choosing $p=q=r=s=3$ in Theorem~\ref{thm: blowup soln in RD system with absorption}), we let $p=2$ in Theorem~\ref{thm: main thm for lower bound for blow-up time (H2) and (H3) are replaced} so that
\begin{equation}
u^{2p-1}\,f_1(u,v)+v^{2p-1}\,f_2(u,v)=2\,u^3v^3-a\,u^6-b\,v^6,
\end{equation}
while
\begin{equation}
k_1\,u^{3p}+k_2\,v^{3p}=k_1\,u^{6}+k_2\,v^{6}.
\end{equation}
By choosing $k_1,k_2\geq1$, it is easy to see that \eqref{eqn: (Hi) which replaces (H2) and (H3)} automatically holds for any $a,b>0$. We apply Theorem~\ref{thm: blowup soln in RD system with absorption} $(iii)-(a)$ to conclude that \eqref{eqn: IBV prob for RD system with absorption} with $p=q=r=s=3$ and $a,b$ sufficiently small admits a blow-up solution. Now Theorem~\ref{thm: main thm for lower bound for blow-up time (H2) and (H3) are replaced} provides a lower bounded for blow-up time $t=t^{\ast}$.

\vspace{5mm}
\setcounter{equation}{0}
\setcounter{figure}{0}
\section{Concluding Remarks}\label{sec: Concluding Remarks}
\vspace{5mm}

In Theorem~\ref{thm: main thm} and Theorem~\ref{thm: main thm for lower bound for blow-up time}, the estimates of the blow-up time for the blow-up solutions of \eqref{eqn: IBV prob for RD system} are given by means of the differential inequality technique adapted from \cite{Payne&SchaeferRobin08}. In the present paper, we have shown that this technique can also be applied to certain reaction-diffusion \textit{systems}. From the proofs of Theorem~\ref{thm: main thm} and Theorem~\ref{thm: main thm for lower bound for blow-up time} however, we cannot determine whether one of $u$ and $v$ blows up or if both $u$ and $v$ blow up. This problem is left for future work.

\vspace{5mm}

\textbf{Acknowledgments.} The authors wish to express sincere gratitude to Dr. Tom Mollee and Dr. Ya-Yu Chen for their careful reading of the manuscript and valuable suggestions and comments to improve the readability and accuracy of the paper. Special thanks are due to Professor Masayasu Mimura for the valuable discussion.

\vspace{5mm}









\end{document}